\newcommand{\B}{\mathbb{B}}
\newcommand{\s}{\mathbb{S}}
\newtheorem{theoreme}{Theorem}[section]
\newtheorem{proposition}{Proposition}[section]
\newtheorem{exemple}{Example}[section]
\newtheorem{remarque}{Remark}[section]
\title{Upper bound for Steklov eigenvalues of warped products with fiber of dimension 2}
\author[Brisson]{Jade Brisson}
\address{Institut de Math\'ematiques, Universit\'e de Neuch\^atel, Rue Emile-Argand 11, 2000 Neuch\^atel, Suisse}
\email{jade.brisson@unine.ch}
\author[Colbois]{Bruno Colbois}
\address{Institut de Math\'ematiques, Universit\'e de Neuch\^atel, Rue Emile-Argand 11, 2000 Neuch\^atel, Suisse}
\email{bruno.colbois@unine.ch}
\begin{document}
\begin{abstract} In this note, we investigate the Steklov spectrum of the warped product $[0,L]\times_h \Sigma$ equipped with the metric $dt^2+h(t)^2g_\Sigma$, where $\Sigma$ is a compact surface. We find sharp upper bounds for the Steklov eigenvalues in terms of the eigenvalues of the Laplacian on $\Sigma$. We apply our method to the case of metric of revolution on the 3-dimensional ball and we obtain a sharp estimate on the spectral gap between two consecutive Steklov eigenvalues.
  \end{abstract}

\maketitle

\section{Introduction}

Let $(\Sigma,g_\Sigma)$ be a closed connected Riemannian manifold of dimension $m$ equipped with Riemannian metric $g_\Sigma$. Consider the Riemannian manifold $M:=\lbrack 0,L\rbrack\times_h \Sigma$ equipped with the Riemannian metric
\[g(t,p):=dt^2+h(t)^2g_\Sigma\,,\]
where $h$ satisfies
    \begin{itemize}
    \item[(H)] $h$ is a positive smooth function such that $h(0)=h(L)=1$.
\end{itemize}
The Steklov problem on $M$ is defined as the following problem
$$
\begin{cases}
    \Delta u=0\,,&\mbox{ in $M$,}\\
    \partial_\nu u=\sigma u\,,&\mbox{ on $\partial M$.}
\end{cases}
$$
The Steklov eigenvalues form an increasing sequence of positive real numbers
\[0=\sigma_0(h)<\sigma_1(h)\leq\sigma_2(h)\leq\cdots\nearrow+\infty\]
where each eigenvalue is repeated according to its multiplicity.

When $m\geq3$, it is proven in \cite[Theorem 3.5]{cesg2019} that $\sup\{\sigma_1(h):h\text{ satisfies }(H)\}=+\infty$ by constructing a family of functions $(h_\varepsilon)$ verifying $(H)$ for which $\sigma_1(h_\varepsilon)$ is arbitrarily large. However, the authors point out, see \cite[Remark 3.6]{cesg2019}, that such a construction does not exist when $m=2$, because for all functions $h$ satisfying $(H)$, we have $\sigma_1(h)\leq \frac{L\lambda_1}{2}$. Naturally, we can ask what is the value of $\sup\{\sigma_1(h):h \text{ satisfies }(H)\}$?  

In the first, and main part, of this paper, we study upper bounds for the Steklov eigenvalues of $M=[0,L]\times_h\Sigma$ in the case where $\dim \Sigma =2$. We suppose that the Riemannian metric $g_{\Sigma}$ of $\Sigma$ is normalized in such a way that the area $\vert \Sigma  \vert$ of $(\Sigma,g_{\Sigma})$ is one. In the last part of this paper, we look at the case where $\Sigma$ is the round sphere $\mathbb S^2$ of area $1$ and we consider revolution-type metrics on the $3$-dimensional ball.

\medskip
In Section \ref{sec:proofs}, we prove the following upper bound for $\sigma_j(h)$.
\begin{theoreme}\label{thm:upperbound}
    Let $M=\lbrack0,L\rbrack\times_h \Sigma$ be a Riemannian manifold equipped with the metric $g=dt^2+h(t)^2g_\Sigma$, where $(\Sigma,g_\Sigma)$ is a closed Riemannian manifold of dimension 2 such that $|\Sigma|=1$ and $h$ is a smooth function on $\lbrack0,L\rbrack$ such that $h(0)=h(L)=1$. Then, for all $j\geq1$, we have
    \[\sigma_j(h)<\frac{L\lambda_j}{2}\,.\]
\end{theoreme}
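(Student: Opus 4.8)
The plan is to combine the variational (min--max) characterization of the Steklov spectrum with separation of variables, reducing the whole problem to a one-dimensional Rayleigh quotient. Write $0=\lambda_0<\lambda_1\le\lambda_2\le\cdots$ for the eigenvalues of the Laplacian on $(\Sigma,g_\Sigma)$ and let $\phi_0\equiv1,\phi_1,\phi_2,\dots$ be an associated $L^2(\Sigma)$-orthonormal basis of eigenfunctions, so that $\int_\Sigma\phi_k\phi_\ell=\delta_{k\ell}$ and $\int_\Sigma\langle\nabla\phi_k,\nabla\phi_\ell\rangle_{g_\Sigma}=\lambda_k\delta_{k\ell}$. For smooth radial profiles $f_0,\dots,f_j$ on $[0,L]$ to be chosen, I would take as test space $V=\operatorname{span}\{f_0\phi_0,\dots,f_j\phi_j\}$, functions of the separated form $u(t,p)=f(t)\phi(p)$.

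Using $dV=h^2\,dt\,dV_\Sigma$, the normalization $|\Sigma|=1$, and $h(0)=h(L)=1$ (so $\partial M$ is two isometric copies of $(\Sigma,g_\Sigma)$), a direct computation in which the orthogonality relations above annihilate all cross terms gives, for any $u=\sum_{k=0}^j c_k f_k\phi_k\in V$,
\[\int_M|\nabla u|^2\,dV=\sum_{k=0}^j c_k^2\int_0^L\bigl(h^2 (f_k')^2+\lambda_k f_k^2\bigr)\,dt,\qquad \int_{\partial M}u^2\,dA=\sum_{k=0}^j c_k^2\bigl(f_k(0)^2+f_k(L)^2\bigr).\]
Hence the Rayleigh quotient of any nonzero $u\in V$ is a convex combination of the one-dimensional quotients $R(f_k):=\bigl(\int_0^L(h^2 (f_k')^2+\lambda_k f_k^2)\,dt\bigr)/\bigl(f_k(0)^2+f_k(L)^2\bigr)$, and is therefore at most $\max_{0\le k\le j}R(f_k)$. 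Provided each profile satisfies $f_k(0)^2+f_k(L)^2>0$, the space $V$ is genuinely $(j+1)$-dimensional and contains no function with vanishing trace on $\partial M$, so the min--max principle yields $\sigma_j(h)\le\max_{0\le k\le j}R(f_k)$.

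The crucial observation is that the constant profile $f\equiv1$ gives exactly $R(f)=\tfrac{L\lambda_k}{2}$, which recovers the non-strict bound of Remark 3.6 but not strictness. To gain strictness I would note that, when $\lambda_k>0$, the constant is not a critical point of $R$: its Euler--Lagrange equation $-(h^2 f')'+\lambda_k f=0$ fails. I would make this explicit by perturbing, setting $f_k=1-\delta\sin(\pi t/L)$, which keeps the boundary term equal to $2$ while the first-order variation of the numerator in $\delta$ is $-2\lambda_k\int_0^L\sin(\pi t/L)\,dt<0$; thus for small $\delta>0$ one has $R(f_k)<\tfrac{L\lambda_k}{2}$. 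Taking $f_0\equiv1$ gives $R(f_0)=0$. Since $\lambda_k\le\lambda_j$ for $k\le j$ and there are only finitely many indices, we conclude $\sigma_j(h)\le\max_{0\le k\le j}R(f_k)<\tfrac{L\lambda_j}{2}$.

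The step requiring most care is precisely this strict improvement: the naive test function delivers only the non-strict inequality, so the entire content of the theorem lies in beating it. The perturbation argument is the cleanest route, but one must verify that the warping enters only through the term $h^2 (f_k')^2$, which contributes at order $\delta^2$ and so cannot spoil the negative first-order variation — this is where the hypothesis $\dim\Sigma=2$ is essential, since it is exactly the dimension making the factor $h^{m}$ in $dV$ cancel the factor $h^{-2}$ in the tangential part of $|\nabla u|^2$. A secondary, more routine, obstacle is justifying rigorously that min--max over the explicit separated subspace $V$ is legitimate, i.e. that $V$ is $(j+1)$-dimensional and trace-nondegenerate.
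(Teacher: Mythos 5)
Your proof is correct, but you reach strictness by a genuinely different route than the paper. The paper works entirely inside the one-dimensional reduction set up in its Section 2: it uses $\sigma_j(h)\leq\sigma_{j,1}(h)$ together with the characterization \eqref{eq:cara1}, gets the non-strict bound from the constant test function, and then proves strictness \emph{by contradiction}: if $\sigma_j(h)=\tfrac{L\lambda_j}{2}$ for some admissible $h$, one chooses a comparison warping $\overline{h}$ with $\overline{h}(0)=\overline{h}(L)=1$ and $\overline{h}>h$ on $(0,L)$; the minimizer $\overline{a}_{j,1}$ for $\sigma_{j,1}(\overline{h})$ solves the ODE \eqref{eq:dea} and hence cannot be constant when $j\geq1$, so using it as a test function for $h$ gives the strict chain $\sigma_{j,1}(h)\leq R_{j,h}(\overline{a}_{j,1})<R_{j,\overline{h}}(\overline{a}_{j,1})=\sigma_{j,1}(\overline{h})\leq\tfrac{L\lambda_j}{2}$, a contradiction. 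You instead (i) re-derive the reduction from scratch via the separated test space $V=\operatorname{span}\{f_k\phi_k\}$ in the Steklov min--max (your verification is sound: trace-injectivity follows from $f_k(0)^2+f_k(L)^2>0$ and $L^2(\Sigma)$-orthogonality of the $\phi_k$, and both orthogonality relations kill the cross terms), and (ii) obtain strictness \emph{constructively}, perturbing the constant profile to $1-\delta\sin(\pi t/L)$ so that the numerator decreases at first order in $\delta$ while the warping only enters at order $\delta^2$. Your route is self-contained, avoids invoking existence and non-constancy of the 1D minimizer, and is quantitative: for a fixed $h$ it yields an explicit deficit below $\tfrac{L\lambda_j}{2}$ (in the spirit of the paper's Theorem \ref{thm:stabint}), though the deficit degenerates as $\int_0^L h^2\,dt\to\infty$, consistent with Theorem \ref{thm:supev}. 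The paper's argument is shorter given its Section 2 setup, but gives no rate. Both proofs correctly hinge on the same structural point: in fiber dimension $2$ the factor $h^2$ from the volume element cancels the $h^{-2}$ in the tangential gradient, so the $\lambda_k$-term of the Rayleigh quotient is independent of $h$.
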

Moreover, this upper bound is optimal.

\begin{theoreme}\label{thm:supev}
     Let $M=\lbrack0,L\rbrack\times_h \Sigma$ be a Riemannian manifold equipped with the metric $g=dt^2+h(t)^2g_\Sigma$, where $(\Sigma,g_\Sigma)$ is a closed Riemannian manifold of dimension 2 and $h$ satisfies condition $(H)$. Then, for all $j\geq1$, we have
    \[\sup\limits_h\{\sigma_j(h):h \text{ satisfies }(H)\}=\frac{L\lambda_j}{2}\,.\]
\end{theoreme}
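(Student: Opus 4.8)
The plan is to combine the strict upper bound of Theorem~\ref{thm:upperbound} with an explicit family of warping functions that realises it in the limit. Since Theorem~\ref{thm:upperbound} already gives $\sigma_j(h)<\tfrac{L\lambda_j}{2}$ for every admissible $h$, we immediately obtain $\sup_h\sigma_j(h)\le\tfrac{L\lambda_j}{2}$, with the supremum moreover never attained. Hence the entire content of the statement is the reverse inequality, and it suffices to exhibit functions $h_\varepsilon$ satisfying $(H)$ with $\sigma_j(h_\varepsilon)\to\tfrac{L\lambda_j}{2}$.

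First I would record the separation of variables underlying the problem. Expanding a boundary datum in an $L^2(\Sigma)$-orthonormal basis of $\Delta_\Sigma$-eigenfunctions, the Dirichlet-to-Neumann operator of $M$ splits into one-dimensional Steklov problems, one for each Laplace eigenvalue $\lambda_k$ (repeated with its multiplicity), whose eigenvalues are the critical values of
\[ R_{\lambda}(f)=\frac{\int_0^L\big(h^2 f'^2+\lambda f^2\big)\,dt}{f(0)^2+f(L)^2}. \]
For each $\lambda>0$ this one-dimensional problem has exactly two eigenvalues $\sigma^-(\lambda)\le\sigma^+(\lambda)$ (the boundary consists of two points, so the relevant operator is $2\times 2$), and the full Steklov spectrum of $M$ is the union over $k$, with multiplicities, of these eigenvalues, the mode $\lambda_0=0$ contributing $0$ and one positive value. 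It therefore suffices to control these one-dimensional eigenvalues.

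The family I would use is $h_\varepsilon=1+\varepsilon^{-1}\eta$, where $\eta\ge 0$ is a fixed smooth function vanishing only at the endpoints; it satisfies $(H)$, and by dominated convergence $A_\varepsilon:=\int_0^L h_\varepsilon^{-2}\,dt\to 0$. Two estimates drive the argument. For the lowest eigenvalue, the constant test function gives $\sigma^-(\lambda_k)\le R_{\lambda_k}(1)=\tfrac{L\lambda_k}{2}$, while the Cauchy--Schwarz bound $|f(t)-f(0)|\le A_\varepsilon^{1/2}\big(\int_0^L h^2 f'^2\big)^{1/2}$ shows that any $f$ with controlled Rayleigh quotient is uniformly close to a constant; feeding this back into $R_{\lambda_k}$ forces $\sigma^-(\lambda_k)\ge\tfrac{L\lambda_k}{2}-o(1)$, whence $\sigma^-(\lambda_k)\to\tfrac{L\lambda_k}{2}$. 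For the second eigenvalue, restricting to the codimension-one space $\{f(0)+f(L)=0\}$ and applying the same Cauchy--Schwarz estimate yields $R_{\lambda_k}(f)\ge 2/A_\varepsilon$ there; by the max--min characterisation (take the excluded direction to be the constant boundary datum) this gives $\sigma^+(\lambda_k)\ge 2/A_\varepsilon\to\infty$, and crucially the bound is uniform in $k$.

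Finally I would assemble the spectrum. For $\varepsilon$ small enough that $2/A_\varepsilon>\tfrac{L\lambda_j}{2}$, every second-branch eigenvalue $\sigma^+(\lambda_k)$ exceeds $\tfrac{L\lambda_j}{2}$, so the $j+1$ smallest Steklov eigenvalues of $M$ are exactly the first-branch values $0=\sigma^-(\lambda_0),\dots,\sigma^-(\lambda_j)$; hence $\sigma_j(h_\varepsilon)=\sigma^-(\lambda_j)\to\tfrac{L\lambda_j}{2}$, which is the sought lower bound on the supremum. The step I expect to be most delicate is precisely this bookkeeping: one must rule out any second-branch eigenvalue $\sigma^+(\lambda_k)$ slipping below $\tfrac{L\lambda_j}{2}$, which is why the uniform-in-$k$ divergence $\sigma^+(\lambda_k)\ge 2/A_\varepsilon$ is essential, together with the matching lower bound $\sigma^-(\lambda_k)\ge\tfrac{L\lambda_k}{2}-o(1)$ and a careful treatment of the boundary layer where $h_\varepsilon$ descends to $1$.
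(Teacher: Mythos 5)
Your proposal is correct and follows essentially the same route as the paper: the upper bound is quoted from Theorem \ref{thm:upperbound}, and the lower bound comes from a family of warping functions blowing up in the interior of $[0,L]$ (the paper's Proposition \ref{prop:supev} takes $h_\varepsilon\equiv 1/\varepsilon$ on $[2\varepsilon,L-2\varepsilon]$ where you take $1+\varepsilon^{-1}\eta$), combined with the same Cauchy--Schwarz oscillation estimate forcing near-constancy of first-branch eigenfunctions and divergence of the second branch. If anything, your max--min argument with the constant boundary direction, which gives $\sigma^{+}(\lambda_k)\ge 2/A_\varepsilon$ uniformly in $k$, is a cleaner justification of the bookkeeping step $\sigma_j(h_\varepsilon)=\sigma^{-}(\lambda_j)$ than the paper's somewhat terse argument based on the boundary values of the first-branch eigenfunction $a_{j,1}$.
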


The proof follows from an intermediary result in which a family of smooth functions $(h_\varepsilon)$ such that $h_\varepsilon(t)\to\infty$ for almost every $t\in\lbrack0,L\rbrack$ and $\sigma_j(h_\varepsilon)\to\frac{\lambda_j L}{2}$ as $\varepsilon\to0$ is constructed, see Section \ref{sec:proofs}. Furthermore, the corresponding eigenfunctions $u_j^\varepsilon$ are uniformly controlled. In Remark \ref{rem:nolower}, we observe that there do not exist a non trivial lower bound.

In Section \ref{sec:stability}, we give a condition that a function $h$ must satisfy in order for $\sigma_j(h)$ to be close to the supremum.

In Section \ref{sec:revolution}, we discuss the case of metrics of revolution on $\B^3$ that are studied in \cite{X21,X22} and recently in \cite{BrCoGi2024}. The authors consider metrics of revolution on $n$-dimensional balls, that is metrics of the form $g_h=dt^2+h(t)^2g_{\s^{n-1}}$ on $\lbrack0,L\rbrack\times\s^{n-1}$. They denote the distinct Steklov eigenvalues, where multiplicity is not counted, by
\[0=\sigma_{(0)}(h)<\sigma_{(1)}(h)<\sigma_{(2)}(h)<\ldots \nearrow\infty\,.\]
The multiplicity of $\sigma_{(j)}(h)$ is  the multiplicity of the $j$-th distinct Laplace eigenvalue $\lambda_{(j)}$ of the sphere $\s^{n-1}$.

In \cite{X22}, under various assumptions on the Ricci curvature of $g_h$ and on the convexity of the boundary, the author produces sharp lower and upper bounds for $\sigma_{(j)}(h)$, see \cite[Theorems 2 and 4]{X22} respectively. Moreover, the equality case corresponds to the ball.
In dimension 3, the metrics considered by the author are very close to the ones studied in this paper. The differences are the following:
\begin{itemize}
    \item The center of the ball corresponds to $t=L$.
    \item We impose that $h(L)=0$ and $h'(L)=-1$.
    \item The value of $h$ at $t=0$ is not fixed.
\end{itemize}
We obtain sharp upper bounds for the distinct Steklov eigenvalues without any assumptions on the curvature of the warped product metric or on the convexity of the boundary.
\begin{theoreme}\label{thm:revolution}
    For a metric $g_h=dt^2+h(t)^2g_{\s^{2}}$ on $\lbrack0,L\rbrack\times\s^{2}$, where $h$ is a smooth function satisfying $h(L)=0$ and $h'(L)=-1$ and $\vert \s^{2}\vert=1$, we have
    \[\sigma_{(j)}(h)<\frac{L\lambda_{(j)}}{h(0)^2}\,.\]
    Moreover, if we fix the value of $h$ at $t=0$, the bound is sharp. Namely, we have
    \[\sup\limits_h\{\sigma_{(j)}(h):h(0)=h_0\}=\frac{L\lambda_{(j)}}{h_0^2}=\frac{Lj(j+1)}{h_0^2}\,.\]
\end{theoreme}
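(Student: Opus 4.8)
The plan is to reduce the problem to a one-dimensional singular Sturm--Liouville problem via separation of variables. Since $g_h$ is rotationally invariant, every Steklov eigenfunction decomposes into pieces $u(t,p)=f(t)Y(p)$ with $Y$ an eigenfunction of $\Delta_{\mathbb{S}^2}$ for the $j$-th distinct eigenvalue $\lambda_{(j)}$; harmonicity of $u$ is then equivalent to the radial equation
\[
\left(h^2 f'\right)' = \lambda_{(j)} f \quad\text{on } [0,L],
\]
together with regularity at the center $t=L$, while the Steklov condition on $\partial M=\{0\}\times\mathbb{S}^2$ (outward normal $-\partial_t$) reads $-f'(0)=\sigma f(0)$. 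Thus $\sigma_{(j)}(h)=-f'(0)/f(0)$, where $f$ is the unique-up-to-scale solution regular at $t=L$, and the multiplicity of $\sigma_{(j)}$ equals that of $\lambda_{(j)}$. I would first record this reduction, which is the standard framework for revolution metrics used in \cite{X22,BrCoGi2024}; from now on I take $j\geq 1$, so $\lambda_{(j)}>0$.

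The next step is to analyze the singular endpoint $t=L$, where $h(t)=(L-t)+O((L-t)^2)$. Freezing coefficients gives the Euler equation $s^2 f''+2sf'-\lambda_{(j)}f=0$ in $s=L-t$, whose indicial roots are $\alpha_\pm=\tfrac12(-1\pm\sqrt{1+4\lambda_{(j)}})$; the regular solution behaves like $(L-t)^{\alpha_+}$ with $\alpha_+>0$. I would extract the three facts I need: $f(L)=0$, $f>0$ near $L$ (after fixing the sign), and, crucially, $h^2 f'\to 0$ as $t\to L^-$ (since $h^2 f'\sim (L-t)^{\alpha_++1}$). Integrating the radial equation over $[0,L]$ and using this vanishing boundary term then yields the key identity
\[
-h(0)^2 f'(0)=\lambda_{(j)}\int_0^L f\,dt,\qquad\text{hence}\qquad \sigma_{(j)}(h)=\frac{\lambda_{(j)}}{h(0)^2}\,\frac{\int_0^L f\,dt}{f(0)}.
\]

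To obtain the strict bound $\sigma_{(j)}(h)<L\lambda_{(j)}/h(0)^2$ it then suffices to show $\int_0^L f\,dt<L\,f(0)$, and this follows from monotonicity of $f$. On the maximal interval $(a,L)$ where $f>0$ we have $(h^2f')'=\lambda_{(j)}f>0$, so $h^2f'$ is strictly increasing and tends to $0$ at $L$, forcing $h^2f'<0$ and hence $f'<0$ on $(a,L)$; a short argument shows $f$ cannot vanish at an interior left endpoint while decreasing, so in fact $a=0$. Therefore $f$ is strictly decreasing and positive on $[0,L)$ with $f(L)=0$, giving $0<f(t)<f(0)$ on $(0,L)$ and the strict inequality $\int_0^L f<L f(0)$.

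For sharpness, with $h(0)=h_0$ fixed, I would construct a smooth family $(h_\varepsilon)$ satisfying $h_\varepsilon(0)=h_0$, $h_\varepsilon(L)=0$, $h_\varepsilon'(L)=-1$, that grows to $+\infty$ on the bulk $[0,L-\delta]$ and transitions down to the prescribed singular behaviour only on the short layer $[L-\delta,L]$. Rewriting the radial equation as $f'(t)=-h(t)^{-2}\lambda_{(j)}\int_t^L f$ and using $0<f\le f(0)$, one gets $0\le f(0)-f(t)\le \lambda_{(j)}L f(0)\int_0^t h_\varepsilon^{-2}$; making $\int_0^{L-\delta}h_\varepsilon^{-2}\to 0$ forces $f_\varepsilon/f_\varepsilon(0)\to 1$ uniformly on $[0,L-\delta]$, whence $\int_0^L f_\varepsilon/f_\varepsilon(0)\to L$ after letting $\delta\to 0$, and the identity above gives $\sigma_{(j)}(h_\varepsilon)\to L\lambda_{(j)}/h_0^2$. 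I expect the main obstacle to be precisely this construction: I must keep the singular data $h_\varepsilon(L)=0,\ h_\varepsilon'(L)=-1$ fixed while blowing up $h_\varepsilon$ on the bulk, and verify that the transition layer near $t=L$ contributes negligibly to $\int_0^L f_\varepsilon$, since the estimate $f\approx f(0)$ degenerates there (the integral $\int^L h^{-2}$ diverges). Choosing $\delta=\delta(\varepsilon)\to 0$ slowly relative to the blow-up rate of $h_\varepsilon$ should resolve this.
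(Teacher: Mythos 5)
Your argument is correct, but it takes a genuinely different route from the paper's. Both start from the same separation-of-variables framework (radial equation $(h^2f')'=\lambda_{(j)}f$, unique regular solution at the center, $\sigma_{(j)}(h)=-f'(0)/f(0)$), but where the paper proves the bound variationally --- plugging the quasi-constant test function ($a\equiv 1$ on $[0,L-\varepsilon]$, linear down to $0$ on $[L-\varepsilon,L]$) into the Rayleigh quotient, letting $\varepsilon\to 0$ to get $\sigma_{(j)}(h)\le L\lambda_{(j)}/h(0)^2$, and then obtaining strictness by a separate comparison argument (if equality held for some $h$, the eigenfunction of a larger warping $\overline h>h$ used as a test function for $h$ gives a contradiction) --- you instead integrate the ODE against the vanishing of $h^2f'$ at $t=L$ to get the exact identity $\sigma_{(j)}(h)=\frac{\lambda_{(j)}}{h(0)^2}\cdot\frac{\int_0^L f\,dt}{f(0)}$ and deduce strictness in one stroke from monotonicity of $f$ (strictly decreasing, $0<f<f(0)$ on $(0,L)$). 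Your route yields an explicit deficit and dispenses with the comparison construction, at the price of the Frobenius analysis at the regular singular point $t=L$, which the paper avoids entirely (it only needs the crude bound $h(t)\le 2(L-t)$ near $L$). For sharpness both use the same kind of family (bulk blow-up with the conical behaviour preserved near $t=L$): the paper recycles the variational Cauchy--Schwarz machinery of its Proposition on the cylinder case verbatim, while you control the eigenfunction through the representation $f'(t)=-\lambda_{(j)}h^{-2}\int_t^L f$, which works equally well. One remark: the ``main obstacle'' you flag --- that the transition layer near $t=L$ might pollute $\int_0^L f_\varepsilon$ --- is not actually an obstacle. Since $f_\varepsilon>0$, you only need the one-sided bound $\int_0^L f_\varepsilon\ge\int_0^{L-\delta}f_\varepsilon\ge (L-\delta)\bigl(1-o(1)\bigr)f_\varepsilon(0)$, the layer contributing nonnegatively, and the matching upper bound is already supplied by your strict inequality; so no delicate coupling of $\delta$ and $\varepsilon$ is needed beyond $\delta\to 0$.
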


In \cite{X21}, the author studies the eigenvalue gaps and ratios for metrics of revolution on the ball. Under assumptions on the Ricci curvature and on the convexity of the boundary, the author establishes sharp lower and upper bounds for the Steklov spectral gaps and Steklov ratios, see \cite[Theorems 2 and 5]{X21}. In \cite{BrCoGi2024}, the authors obtain results for the Steklov ratios without any assumption on the curvature or the convexity of the boundary, but show that, if $n>2$, no upper bound exists for the gap.

As a corollary of Theorem \ref{thm:revolution}, we get an optimal upper bound for the gap $\sigma_{(j+1)}(h)-\sigma_{(j)}(h)$ without any assumptions on the curvature or on the convexity of the boundary.
\begin{theoreme} \label{thm:gaprevolution}
    For a metric $g_h=dt^2+h(t)^2g_{\s^{2}}$ on $\lbrack0,L\rbrack\times\s^{2}$, where $h$ is a smooth function satisfying $h(L)=0$ and $h'(L)=-1$, we have for each $j\geq0$
    \[\sigma_{(j+1)}(h)-\sigma_{(j)}(h) < \frac{L(\lambda_{(j+1)}-\lambda_{(j)})}{h(0)^2}\,.\]
    Moreover, if we fix the value of $h$ at $t=0$, the upper bound is optimal:
    \[\sup\limits_h\{\sigma_{(j+1)}(h)-\sigma_{(j)}(h):h(0)=h_0\}=\frac{L(\lambda_{(j+1)}-\lambda_{(j)})}{h_0^2}\,.\]
\end{theoreme}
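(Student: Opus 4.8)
The plan is to reduce the problem to a one–dimensional analysis, mode by mode, and then to control the increment of the Steklov eigenvalue as a function of the Laplace eigenvalue by a single explicit comparison, deducing optimality from the extremal family already used for Theorem \ref{thm:revolution}.

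First I would recall that, because the metric is of warped–product type, harmonic functions separate as $u(t,p)=f(t)\phi(p)$ with $\Delta_{\s^{2}}\phi=\lambda\phi$. Writing $dV=h^2\,dV_{\s^{2}}\,dt$ and normalizing $\int_{\s^{2}}\phi^2=1$, $\int_{\s^{2}}|\nabla\phi|^2=\lambda$, the Steklov eigenvalue attached to the mode $\lambda$ is the one–dimensional Rayleigh quotient
\[
\sigma(\lambda)=\min_{f}\frac{\int_0^L\bigl(h^2 (f')^2+\lambda f^2\bigr)\,dt}{h(0)^2 f(0)^2}\,,
\]
the minimum being over admissible radial profiles $f$, i.e.\ those for which $u=f\phi$ has finite Dirichlet energy; here $h(L)=0,\ h'(L)=-1$ force the regular behaviour $f(t)\sim c(L-t)^{\ell}$ near the center. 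The distinct Steklov eigenvalues are then exactly $\sigma_{(j)}(h)=\sigma(\lambda_{(j)})$, and the minimizer $f_\lambda$ solves $(h^2 f')'=\lambda f$ with natural boundary condition $f'(0)=-\sigma(\lambda)f(0)$.

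The heart of the argument is a comparison obtained by inserting the minimizer $f_{\lambda_{(j)}}$ of the $j$-th mode as a competitor for the $(j+1)$-th mode. Since the two numerators differ only in the coefficient of $\int_0^L f^2$, this gives
\[
\sigma_{(j+1)}(h)-\sigma_{(j)}(h)\leq\bigl(\lambda_{(j+1)}-\lambda_{(j)}\bigr)\,\frac{\int_0^L f_{\lambda_{(j)}}^2\,dt}{h(0)^2 f_{\lambda_{(j)}}(0)^2}\,,
\]
so it suffices to prove the key estimate $\int_0^L f_\lambda^2\,dt< L\,f_\lambda(0)^2$ for every $\lambda>0$. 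I would establish this through a monotonicity lemma: normalizing $f_\lambda(0)>0$, the Steklov condition gives $f_\lambda'(0)<0$, and since $(h^2 f_\lambda')'=\lambda f_\lambda$ the quantity $h^2 f_\lambda'$ is increasing wherever $f_\lambda>0$; as the regular behaviour forces $h^2 f_\lambda'\to 0^-$ when $t\to L$, one obtains $h^2 f_\lambda'<0$, hence $f_\lambda'<0$, throughout $(0,L)$. Thus $f_\lambda$ is positive and strictly decreasing from $f_\lambda(0)$ to $f_\lambda(L)=0$, whence $f_\lambda(t)<f_\lambda(0)$ on $(0,L]$ and the strict inequality follows. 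This yields the claimed strict bound for $j\geq1$; the case $j=0$ is immediate from Theorem \ref{thm:revolution}, since the gap is then $\sigma_{(1)}(h)<L\lambda_{(1)}/h(0)^2$.

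For optimality I would invoke the extremal family $(h_\varepsilon)$ from the proof of Theorem \ref{thm:revolution}, normalized so that $h_\varepsilon(0)=h_0$, $h_\varepsilon(L)=0$, $h_\varepsilon'(L)=-1$ and $h_\varepsilon\to\infty$ almost everywhere on $(0,L)$. Because the mechanism driving $\sigma_{(j)}(h_\varepsilon)\to L\lambda_{(j)}/h_0^2$ — the penalization of $f'$ by the divergent weight $h_\varepsilon^2$, which pushes every minimizing profile toward a constant — acts on all modes at once, the same family realizes these limits simultaneously for all $j$; hence $\sigma_{(j+1)}(h_\varepsilon)-\sigma_{(j)}(h_\varepsilon)\to L(\lambda_{(j+1)}-\lambda_{(j)})/h_0^2$, which together with the strict upper bound identifies the supremum. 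I expect the main obstacle to be the monotonicity lemma: one must pin down the signs of $f_\lambda$ and $f_\lambda'$ on all of $(0,L)$ using only the degenerate data at $t=L$, and verify that $f_{\lambda_{(j)}}$ remains a finite–energy competitor for the higher mode $\lambda_{(j+1)}$ despite the metric singularity at the center.
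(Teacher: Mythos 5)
Your proposal is correct and follows the same skeleton as the paper's proof: insert the radial profile of the $\sigma_{(j)}$-eigenfunction as a test function for the mode $\lambda_{(j+1)}$, so that the gap is controlled by $(\lambda_{(j+1)}-\lambda_{(j)})\int_0^L f_{\lambda_{(j)}}^2\,dt\big/\big(h(0)^2f_{\lambda_{(j)}}(0)^2\big)$, and obtain optimality from the same extremal family $(h_\varepsilon)$ as in Theorem \ref{thm:revolution}. Where you genuinely diverge is in the proof of the key estimate $\int_0^L f_{\lambda_{(j)}}^2\,dt<L\,f_{\lambda_{(j)}}(0)^2$. The paper gets it in two lines of soft variational reasoning: since the eigenfunction $a_j$ is not constant, $\frac{\lambda_{(j)}}{h(0)^2}\int_0^L a_j^2\,dt<\sigma_{(j)}(h)$, and $\sigma_{(j)}(h)<\frac{L\lambda_{(j)}}{h(0)^2}$ by Theorem \ref{thm:revolution}. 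You prove instead an ODE monotonicity lemma ($f_\lambda$ positive and strictly decreasing to $0$), which costs an analysis at the degenerate endpoint $t=L$ — and, to be airtight, you should first reduce to $f_\lambda\ge0$ by replacing $f_\lambda$ with $|f_\lambda|$ (a minimizer of the same Rayleigh quotient), since otherwise ``$h^2f_\lambda'$ increasing wherever $f_\lambda>0$'' combined with the limit at $L$ is circular — but buys pointwise information and makes the gap inequality for $j\ge1$ independent of the strict bound of Theorem \ref{thm:revolution}. Two smaller differences are worth noting. First, you rightly treat $j=0$ separately: the paper's displayed chain divides by $\lambda_{(j)}$ and invokes non-constancy of $a_j$, both of which fail at $j=0$, so your case distinction actually repairs a small blemish in the paper's argument. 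Second, for optimality you subtract the limits of $\sigma_{(j)}(h_\varepsilon)$ and $\sigma_{(j+1)}(h_\varepsilon)$, which is legitimate because the construction (Proposition \ref{prop:supev} transplanted to the revolution setting) yields these limits simultaneously, with a single family, for all modes up to any fixed $k$; the paper instead telescopes the gap bounds and squeezes using only the convergence of the top eigenvalue $\sigma_{(j+1)}(h_\varepsilon)$. Both are valid, but your heuristic phrase about ``the mechanism acting on all modes at once'' should be replaced by an explicit appeal to that simultaneity, which is exactly what the proof of Proposition \ref{prop:supev} provides.
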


For further information about the gap and ratio problem in the case of the Schrödinger operator, the reader may look at \cite[Chapters 6, 7]{BeLiLo2012}, in the case of the Robin problem at \cite{lau19} and in the case of the Steklov problem at \cite[Chapter 4]{CGGS2024} and at the introduction of \cite{BrCoGi2024}.

\section{Proofs of Theorems \ref{thm:upperbound} and \ref{thm:supev}}\label{sec:proofs}

We start this section with a discussion on the Steklov problem on $M=\lbrack0,L\rbrack\times_h\Sigma$.
Let $\{\phi_j\}$ be an orthonormal basis of eigenfunctions of the Laplacian on $\Sigma$. Denote by $\lambda_j$ the eigenvalue associated to $\phi_j$.
By separation of variables, the Steklov eigenfunctions are of the form $u=a_j\phi_j$ where $a_j$ is a solution of the differential equation
\begin{equation}\label{eq:dea}
    (h^2 a')'-\lambda_ja=0\,.
\end{equation}
Thus, the Rayleigh quotient of $u$ is given by
\[R(u)=\frac{\int\limits_{0}^La_j'(t)^2h(t)^2+\lambda_ja_j(t)^2\,dt}{a_j(0)^2+a_j(L)^2}\,.\]
By solving the differential equation \eqref{eq:dea} with the Steklov boundary condition, we obtain, for each $j\geq0$ fixed, two eigenvalues $\sigma_{j,1}(h)\leq\sigma_{j,2}(h)$. We respectively denote the associated eigenfunctions by $a_{j,1}$ and $a_{j,2}$.
By defining 
$$R_{j,h}(a):=\int\limits_{0}^La'(t)^2h(t)^2+\lambda_ja(t)^2\,dt\,,$$
the previous eigenvalues admit the variational characterization:
\begin{gather}
    \sigma_{j,1}(h)=\min\bigg\{R_{j,h}(a):a(0)^2+a(L)^2=1\bigg\}=R_{j,h}(a_{j,1})\label{eq:cara1}\,,\\
    \sigma_{j,2}(h)=\min\bigg\{R_{j,h}(a):a(0)a_{j,1}(0)+a(L)a_{j,1}(L)=0\,,a(0)^2+a(L)^2=1\bigg\}=R_{j,h}(a_{j,2}) \label{eq:cara2}\,.
\end{gather}
The Steklov spectrum of $M$ is the union of each family, i.e. $\cup_j\{\sigma_{j,1}(h),\sigma_{j,2}(h)\}$. In general, we can say that $\sigma_j(h)\leq\sigma_{j,1}(h)$, but the equality case is not guaranteed.

\medskip
A simple example is the cylinder $\lbrack 0,L\rbrack\times\Sigma$.
\begin{exemple}\label{ex:cylinder}
    Consider the case $h\equiv1$. In that case, the functions $a_{j,1}$ and $a_{j,2}$ satisfy the problem
    \begin{equation*}
    \begin{cases}
        a''-\lambda_j a=0\,,\\
        a'(L)=\sigma a(L)\,,\\
        a'(0)=-\sigma a(0)\,.
    \end{cases}
    \end{equation*}
    The solution to the differential equation is $a(r)=A\sinh(\sqrt{\lambda_j}r)+B\cosh(\sqrt{\lambda_j}r)$ if $j\neq0$.
    The boundary conditions become
    \begin{gather*}
        A(\sqrt{\lambda_j}\cosh(\sqrt{\lambda_j}L)-\sigma\sinh(\sqrt{\lambda_j}L))+B(\sqrt{\lambda_j}\sinh(\sqrt{\lambda_j}L)-\sigma\cosh(\sqrt{\lambda_j}L))=0\\
        A\sqrt{\lambda_j}+B\sigma=0\,.
    \end{gather*}
    We have a nonzero solution for the constants $A$ and $B$ if, and only if, the determinant of the system is 0, i.e if
    \[\sinh(\sqrt{\lambda_j}L)\sigma^2-2\sqrt{\lambda_j}\cosh(\sqrt{\lambda_j}L)\sigma+\lambda_j\sinh(\sqrt{\lambda_j}L)=0\,,\]
    which is verified when $\sigma=\sqrt{\lambda_j}\tanh\big(\frac{\sqrt{\lambda_j}L}{2}\big)$ and $\sigma=\sqrt{\lambda_j}\coth\big(\frac{\sqrt{\lambda_j}L}{2}\big)$.
    Thus, we have
    \[\sigma_{j,1}=\sqrt{\lambda_j}\tanh\bigg(\frac{\sqrt{\lambda_j}L}{2}\bigg)<\sqrt{\lambda_j}\coth\bigg(\frac{\sqrt{\lambda_j}L}{2}\bigg)=\sigma_{j,2}\,.\]
    If $j=0$, then the solution to the problem is $a(r)=A+Br$. The boundary conditions become
    \begin{gather*}
        A\sigma+B(\sigma L-1)=0\\
        A\sigma +B=0\,.
    \end{gather*}
    We have a nonzero solution for the constants $A$ and $B$ if, and only if, the determinant of the system is 0, i.e. if
    \[\sigma^2 L-2\sigma=0\,,\]
    which is verified for $\sigma=0$ and $\sigma=\frac{2}{L}$. Thus, we have $\sigma_{0,1}=0<\frac{2}{L}=\sigma_{0,2}$.
\end{exemple}

\begin{proof}[Proof of Theorem \ref{thm:upperbound}]
    First of all, by taking the constant function in the characterization \eqref{eq:cara1}, we obtain that
    \[\sigma_j(h)\leq\sigma_{j,1}(h)\leq \frac{\lambda_j L}{2}\,.\]
To obtain the strict inequality, observe that if there exists $h\in C^\infty(\lbrack 0,L\rbrack)$ such that $h(L)=h(0)=1$ and $\sigma_j(h)=\frac{L\lambda_j}{2}$, then we can construct another function $\overline{h}\in C^\infty(\lbrack 0,L\rbrack)$ such that $\overline{h}(L)=\overline{h}(0)=1$ and $\overline{h}(r)>h(r)$ for all $r\in (0,L)$. 
Let $\overline{a}_{j,1}$ be the eigenfunction for $\sigma_{j,1}(\overline{h})$.
Let us observe first that $\overline{a}_{j,1}$ is not the constant function since the constant function is not a solution of Equation \eqref{eq:dea} when $j\geq1$.
By taking $\overline{a}_{j,1}$ as a test function for $\sigma_{j,1}(h)$, we obtain that
\[\sigma_{j,1}(h)\leq \int\limits_{0}^L \overline{a}_{j,1}'(t)^2h(t)^2+\lambda_j\overline{a}_{j,1}(t)^2\,dt<\int\limits_{0}^L \overline{a}_{j,1}'(t)^2\overline{h}(t)^2+\lambda_j\overline{a}_{j,1}(t)^2\,dt=\sigma_{j,1}(\overline{h})\leq \frac{L\lambda_j}{2}\,,\]
which is a contradiction.
\end{proof}

Theorem \ref{thm:supev} is a consequence of Theorem \ref{thm:upperbound} and the following result.
\begin{proposition}\label{prop:supev}
     Let $(\Sigma,g_\Sigma)$ be a closed Riemannian manifold of dimension 2 and consider $M=\lbrack 0,L\rbrack\times \Sigma$ equipped with the Riemannian metric $g=dt^2+g_\Sigma$. 
     For each $k\geq1$ fixed, there exist $\varepsilon_0>0$ and a family of Riemannian metrics $g_\varepsilon=dt^2+h_\varepsilon(t)^2g_\Sigma$, defined for $\varepsilon<\varepsilon_0$, which coincides with $g$ on a neighbourhood of the boundary $\partial M$, such that, for $1\leq j\leq k$, we have
     \[\lim\limits_{\varepsilon\to0}\sigma_j(h_\varepsilon)=\frac{\lambda_j L}{2}\,.\]
     Moreover, if $a_j\phi_j$ is an eigenfunction associated to $\sigma_j(h_\varepsilon)$, then $a_j$ converges uniformly to $1$ as $\varepsilon\to0$.
\end{proposition}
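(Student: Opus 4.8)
The plan is to push the warping function to infinity in the interior while keeping it equal to $1$ near $\partial M$, so that the penalising term $\int_0^L (a')^2h^2$ in $R_{j,h}$ freezes the optimal profile to a constant, whose Rayleigh quotient is exactly $\lambda_jL/2$ by \eqref{eq:cara1}. Concretely I would fix a smooth $h_\varepsilon$ with $h_\varepsilon\equiv1$ on $[0,\varepsilon]\cup[L-\varepsilon,L]$, $h_\varepsilon\equiv 1/\varepsilon$ on $[2\varepsilon,L-2\varepsilon]$, and a monotone interpolation in between with $h_\varepsilon\ge1$ throughout; then $g_\varepsilon$ coincides with $g$ on a neighbourhood of $\partial M$ and $h_\varepsilon(t)\to\infty$ for every $t\in(0,L)$. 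The single quantity governing every estimate is
\[\delta_\varepsilon:=\int_0^L h_\varepsilon(t)^{-2}\,dt\le L\varepsilon^2+2\varepsilon\xrightarrow[\varepsilon\to0]{}0\,.\]

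Next I would establish the two-sided control of $\sigma_{j,1}(h_\varepsilon)$. The upper bound $\sigma_{j,1}(h_\varepsilon)\le\lambda_jL/2$ is immediate from \eqref{eq:cara1} with the constant test function, exactly as in the first line of the proof of Theorem \ref{thm:upperbound}. For the matching lower bound, let $a=a_{j,1}^\varepsilon$ be the minimiser normalised by $a(0)^2+a(L)^2=1$, so that $\int_0^L(a')^2h_\varepsilon^2\le\sigma_{j,1}(h_\varepsilon)\le\lambda_jL/2$. The key step is a weighted Cauchy–Schwarz estimate on the oscillation,
\[\operatorname{osc}(a)\le\int_0^L|a'|\,dt=\int_0^L|a'|h_\varepsilon\cdot h_\varepsilon^{-1}\,dt\le\Big(\int_0^L(a')^2h_\varepsilon^2\Big)^{1/2}\delta_\varepsilon^{1/2}\le\Big(\tfrac{\lambda_jL}{2}\,\delta_\varepsilon\Big)^{1/2}\xrightarrow[\varepsilon\to0]{}0\,.\]
Since $a(0)^2+a(L)^2=1$ forces $\sup|a|\ge1/\sqrt2$ while $\lambda_j\int a^2\le\lambda_jL/2$ forces $\inf|a|\le1/\sqrt2$, the vanishing oscillation pins $a$ uniformly to the constant $1/\sqrt2$ (after fixing the sign), giving both the uniform convergence of the eigenfunction (after rescaling so the limit is $1$) and the lower bound $\sigma_{j,1}(h_\varepsilon)\ge\lambda_j\int a^2\ge\lambda_jL(\tfrac1{\sqrt2}-\operatorname{osc}(a))^2\to\lambda_jL/2$. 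Hence $\sigma_{j,1}(h_\varepsilon)\to\lambda_jL/2$.

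The main obstacle is then to pass from $\sigma_{j,1}$ to the genuinely ordered eigenvalue $\sigma_j$, i.e. to prove $\sigma_j(h_\varepsilon)=\sigma_{j,1}(h_\varepsilon)$ for $1\le j\le k$ once $\varepsilon$ is small. The difficulty is that a priori infinitely many families $i$ could contribute small eigenvalues, so I need to control the companions $\sigma_{i,2}(h_\varepsilon)$ \emph{uniformly in $i$}. Viewing $\sigma_{i,2}(h)$ as the larger eigenvalue of the $2\times2$ Dirichlet-to-Neumann matrix of family $i$, it dominates the Rayleigh quotient of every boundary datum; testing the antisymmetric datum $a(0)=1,\ a(L)=-1$ and denoting by $a$ its $\lambda_i$-harmonic extension, a further weighted Cauchy–Schwarz gives
\[\sigma_{i,2}(h)\ \ge\ \frac{\int_0^L(a')^2h^2\,dt}{a(0)^2+a(L)^2}\ \ge\ \frac{(a(L)-a(0))^2}{2\int_0^L h^{-2}\,dt}=\frac{2}{\delta_\varepsilon}\qquad\text{for every }i\ge0\,.\]
Thus $\min_i\sigma_{i,2}(h_\varepsilon)\ge 2/\delta_\varepsilon\to\infty$, so for small $\varepsilon$ every second eigenvalue exceeds $\lambda_kL/2\ge\sigma_{k,1}(h_\varepsilon)$. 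Combined with the monotonicity $0=\sigma_{0,1}\le\sigma_{1,1}\le\cdots$ (which holds because $R_{i,h}\ge R_{i',h}$ whenever $\lambda_i\ge\lambda_{i'}$), the $k+1$ smallest Steklov eigenvalues are exactly $\sigma_{0,1},\dots,\sigma_{k,1}$, already in nondecreasing order. Therefore $\sigma_j(h_\varepsilon)=\sigma_{j,1}(h_\varepsilon)\to\lambda_jL/2$ for $1\le j\le k$, and the clean uniform-in-$i$ bound $2/\delta_\varepsilon$ is precisely what makes this last, delicate step go through.
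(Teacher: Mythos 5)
Your proof is correct, and its skeleton is the same as the paper's: the same plateau construction $h_\varepsilon$ (equal to $1$ near the boundary, $1/\varepsilon$ in the interior), the constant test function in \eqref{eq:cara1} for the upper bound $\sigma_{j,1}(h_\varepsilon)\leq \lambda_j L/2$, a weighted Cauchy--Schwarz oscillation estimate (your $\delta_\varepsilon=\int_0^L h_\varepsilon^{-2}\,dt$ is a tidier packaging of the paper's piecewise inequality \eqref{eq:diffa1}; note only that the interpolation zones make the correct bound $\delta_\varepsilon\leq L\varepsilon^2+4\varepsilon$, which changes nothing), and near-constancy of the minimiser for the matching lower bound. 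Where you genuinely diverge is the step $\sigma_j(h_\varepsilon)=\sigma_{j,1}(h_\varepsilon)$, and there your treatment is cleaner and, strictly speaking, more complete. The paper argues family by family for $0\leq j\leq k$ only: it shows via \eqref{eq:cara2} and \eqref{eq:diffa1} that the companion eigenfunction $a_{j,2}$ is nearly antisymmetric at the boundary, so $\sigma_{j,2}(h_\varepsilon)\to\infty$, and it leaves implicit both the monotonicity of $\sigma_{i,1}$ in $i$ and the families $i>k$ (for which one needs $\sigma_{i,2}\geq\sigma_{i,1}\geq\sigma_{k,1}$). Moreover, the intermediate claim the paper actually displays, $\sigma_{j,2}(h_\varepsilon)>L\lambda_j/2$ for $j\leq k$, would not by itself order the spectrum correctly (a priori $\sigma_{1,2}$ could still lie below $\sigma_{k,1}$); what is really used is the divergence that its argument yields. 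Your uniform-in-$i$ bound $\sigma_{i,2}(h_\varepsilon)\geq 2/\delta_\varepsilon$, obtained by viewing $\sigma_{i,2}$ as the largest eigenvalue of the two-dimensional boundary form and testing the antisymmetric datum after discarding the zeroth-order term, handles all families at once with a constant independent of $\lambda_i$, which makes the bookkeeping transparent; the paper's version is shorter on the page because it recycles \eqref{eq:diffa1}, but yours is the more robust way to close this step.
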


\begin{proof}
Let $h_\varepsilon$ be a smooth function such that
$$
h_\varepsilon(t):=\begin{cases}
    1\,,&\mbox{ if $t\in\lbrack0,\varepsilon\rbrack\cup\lbrack L-\varepsilon,L\rbrack$,}\\
    \frac{1}{\varepsilon}\,,&\mbox{ if $t\in\lbrack 2\varepsilon,L-2\varepsilon\rbrack$,}
\end{cases}
$$
and $h_\varepsilon$ increases on $\lbrack \varepsilon,2\varepsilon\rbrack$ and decreases on $\lbrack L-2\varepsilon,L-\varepsilon\rbrack$.

\medskip
Consider the metric 
$g_\varepsilon:=dt^2+h_\varepsilon^2(t)g_\Sigma$ on $M$.

\medskip
Let $a$ be a function defined on $\lbrack 0,L\rbrack$. We show that $|a(L)-a(0)|$ is controlled by the first term of its Rayleigh quotient.
On $\lbrack 0,2\varepsilon\rbrack$, since $h_\varepsilon\geq1$, we have
\[\int\limits_{0}^{2\varepsilon} a'(t)^2h_\varepsilon^2(t)\,dt\geq\int\limits_{0}^{2\varepsilon} a'(t)^2\,dt\,.\]
By the Cauchy--Schwarz inequality, it follows that
\[|a(2\varepsilon)-a(0)|=\bigg|\int\limits_{0}^{2\varepsilon}a'(t)\,dt\bigg|\leq \sqrt{2\varepsilon}\bigg(\int\limits_{0}^{2\varepsilon} a'(t)^2\,dt\bigg)^{1/2}\,,\]
which implies that
\[|a(2\varepsilon)-a(0)|\leq\sqrt{2\varepsilon}\bigg(\int\limits_{0}^{2\varepsilon} a'(t)^2h_\varepsilon^2(t)\,dt\bigg)^{1/2}\,.\]

Similarly, on $\lbrack L-2\varepsilon,L\rbrack$, we have
\[|a(L)-a(L-2\varepsilon)|\leq\sqrt{2\varepsilon}\bigg(\int\limits_{L-2\varepsilon}^{L} a'(t)^2h_\varepsilon^2(t)\,dt\bigg)^{1/2}\,.\]

For each $r\in\lbrack 2\varepsilon,L-2\varepsilon\rbrack$, since $h_\varepsilon=\frac{1}{\varepsilon}$, we have
\[\int\limits_{2\varepsilon}^{r} a'(t)^2h_\varepsilon^2(t)\,dt=\frac{1}{\varepsilon^2}\int\limits_{2\varepsilon}^{r} a'(t)^2\,dt\,.\]
By the Cauchy--Schwarz inequality, it follows that
\[|a(r)-a(2\varepsilon)|\leq \sqrt{r-2\varepsilon}\bigg(\int\limits_{2\varepsilon}^{r} a'(t)^2\,dt\bigg)^{1/2}\,,\]
which implies that
\[|a(r)-a(2\varepsilon)|\leq\sqrt{L}\varepsilon\bigg(\int\limits_{2\varepsilon}^{r} a'(t)^2h_\varepsilon^2(t)\,dt\bigg)^{1/2}\,.\]

Thus, for each $r\in\lbrack 0,L\rbrack$, we have that
\begin{equation}\label{eq:diffa1}
|a(r)-a(0)|\leq C\sqrt{\varepsilon}\bigg(\int\limits_{0}^L a'(t)^2h_\varepsilon^2(t)\,dt\bigg)^{1/2}\,.
\end{equation}

It is clear that $\sigma_{0,1}(h_\varepsilon)=0$ with $a_{0,1}\equiv1$. By the characterization \eqref{eq:cara2} of $\sigma_{0,2}(h_\varepsilon)$, the function $a_{0,2}$ satisfies $a_{0,2}(0)=-a_{0,2}(L)$. Thus, by the Inequality \eqref{eq:diffa1} with $r=L$, we conclude that
\[\lim\limits_{\varepsilon\to0}\sigma_{0,2}(h_\varepsilon)=+\infty\,.\]
For $j\geq1$, in order to have an eigenfunction $a_{j,1}$ for $\sigma_{j,1}(h_\varepsilon)$, it must minimize the Inequality \eqref{eq:diffa1} with $r=L$, which implies that $|a_{j,1}(L)-a_{j,1}(0)|\to 0$ as $\varepsilon\to 0$.
Otherwise, if $|a(L)-a(0)|\not\to0$ as $\varepsilon\to0$, then the Inequality \eqref{eq:diffa1} with $r=L$ gives us that
\[\lim\limits_{\varepsilon\to0}R_{j,h_\varepsilon}(a)=+\infty\,.\]
Thus, for $k\geq1$ fixed, there exists an $\varepsilon_0>0$ such that for every $\varepsilon<\varepsilon_0$ and every $0\leq j\leq k$, we have
\[\sigma_{j,2}(h_\varepsilon)>\frac{L\lambda_j}{2}\,.\]
We deduce that 
$$\sigma_j(h_\varepsilon)=\sigma_{j,1}(h_\varepsilon)$$ 
for all $0\leq j\leq k$.

Let $1\leq j\leq k$. We already know from the proof of Theorem \ref{thm:upperbound}, that $\sigma_{j,1}(h_\varepsilon)\leq \frac{L\lambda_j}{2}$, which implies that $\int\limits_{0}^L a_{j,1}'(t)^2h_\varepsilon(t)^2\,dt\leq \sigma_{j,1}(h_\varepsilon)\leq\frac{L\lambda_j}{2}$. Without loss of generality, we assume that $a_{j,1}(0)>0$. Then, it follows from Equation \eqref{eq:diffa1} that, for all $r\in\lbrack0,L\rbrack$, we have
\[|a_{j,1}(r)-a_{j,1}(0)| \leq C_1 \sqrt{\varepsilon}\,.\]
We then have that
\[a_{j,1}(r)^2\geq (a_{j,1}(0)-C_1\sqrt{\varepsilon})^2\geq a_{j,1}(0)^2-2a_{j,1}(0)C_1\sqrt{\varepsilon}\,,\]
and that
\[a_{j,1}(r)^2\leq (a_{j,1}(0)+C_1\sqrt{\varepsilon})^2\leq a_{j,1}(0)^2+C_2\sqrt{\varepsilon}\,.\]
Thus, we have
\begin{gather*}
\sigma_{j,1}(h_\varepsilon)\geq \frac{\int\limits_{0}^L\lambda_j a_{j,1}(t)^2\,dt}{a_{j,1}(0)^2+a_{j,1}(L)^2}\geq \frac{L\lambda_ja_{j,1}(0)^2-C_3\sqrt{\varepsilon}}{2a_{j,1}(0)^2+C_2\sqrt{\varepsilon}}=\bigg(\frac{\lambda_j L}{2}-C_4\sqrt{\varepsilon}\bigg)\frac{1}{1+C_5\sqrt{\varepsilon}}\\
=\bigg(\frac{L\lambda_j}{2}-C_4\sqrt{\varepsilon}\bigg)\bigg(1-C_5\sqrt{\varepsilon}+O(\varepsilon)\bigg)=\frac{\lambda_j L}{2}-C_6\sqrt{\varepsilon}+O(\varepsilon)\,.
\end{gather*}
Taking the limit as $\varepsilon\to0$ concludes the proof.
Moreover, for $1\leq j\leq k$, the function $a_{j,1}$ converges uniformly to the constant function, which can be chosen to be 1, by Equation \eqref{eq:diffa1}.
\end{proof}

\begin{remarque}  \label{rem:nolower}

    It is natural to also ask if there exists a lower bound for $\sigma_j(h)$. In the case studied here, there is no non trivial lower bound. Indeed, define a family of smooth functions $(h_\varepsilon)$ by
    $$
    h_\varepsilon(t):=\begin{cases}
        1\,,&0\leq t\leq \varepsilon\,,L-\varepsilon\leq t\leq L\,,\\
        \varepsilon^2\,,&2\varepsilon\leq t\leq L-2\varepsilon\,.
    \end{cases}
     $$
     and $h_\varepsilon$ decreases on $\lbrack \varepsilon,2\varepsilon\rbrack$ and increases on $\lbrack L-2\varepsilon,L-\varepsilon\rbrack$.

     \medskip
    Consider the function 
    $$
    a(t):=\begin{cases}
    1\,,&0\leq t\leq 2\varepsilon\,,\\
    3-\frac{t}{\varepsilon}\,,&2\varepsilon\leq t\leq 3\varepsilon\,,\\
    0\,,&3\varepsilon\leq t\leq L\,.
    \end{cases}
    $$
    By using $a$ as a test function in the variational characterization \eqref{eq:cara1} for $\sigma_{j,1}(h_\varepsilon)$, we obtain that
    \begin{gather*}
        \sigma_j(h_\varepsilon)\leq\sigma_{j,1}(h_\varepsilon)\leq\int\limits_0^{3\varepsilon} a'(t)^2h(t)^2+\lambda_ja(t)^2\,dt\leq \int\limits_{2\varepsilon}^{3\varepsilon}\varepsilon^2\,dt+\int\limits_0^{3\varepsilon}\lambda_j\,dt=3\lambda_j\varepsilon+\varepsilon^3\,.
    \end{gather*}
    Thus, we have that $\lim\limits_{\varepsilon\to0}\sigma_j(h_\varepsilon)=0$.
\end{remarque}

\section{Approaching the supremum}\label{sec:stability}

Since there is no function $h\in C^\infty(\Omega)$ such that $h(0)=h(L)=1$ for which we have $\sigma_j(h)= \frac{L\lambda_j}{2}$, it is natural to investigate what conditions must a function $h$ satisfy in order for $\sigma_j(h)$ to be close to the supremum in Theorem \ref{thm:supev}.
We show that if $h$ is bounded above on a small interval, then $\sigma_j(h)$ is far from the supremum.

\begin{theoreme}\label{thm:stabint}
    Suppose that there exist $0<L_1<L_2<L$ and a positive constant $c>0$ such that $h\leq c$ on $\lbrack L_1,L_2\rbrack$. Then, for each $j\geq1$, we have
    \[\sigma_k(h)\leq\frac{\lambda_j L}{2}-\gamma\,,\]
    where $\gamma=\min\bigg\{\frac{\lambda_j(L_2-L_1)}{4}\,,\frac{3\lambda_j^2(L_2-L_1)^3}{8(12c^2+\lambda_j(L_2-L_1)^2)}\bigg\}$.
\end{theoreme}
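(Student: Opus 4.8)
The plan is to bound $\sigma_{j,1}(h)$ by exhibiting a competitor in the variational characterization \eqref{eq:cara1} and then use the general inequality $\sigma_j(h)\le\sigma_{j,1}(h)$. The constant function $a\equiv\frac{1}{\sqrt2}$ satisfies $a(0)^2+a(L)^2=1$ and gives $R_{j,h}(a)=\frac{\lambda_j L}{2}$, which is exactly the supremum isolated in Theorem \ref{thm:upperbound}. To improve on it, I would perturb this constant downward only on $[L_1,L_2]$: there the hypothesis $h\le c$ keeps the extra gradient energy $\int (a')^2h^2$ cheap, while the dip strictly lowers the zeroth-order contribution $\int\lambda_j a^2$. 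Since $[L_1,L_2]\subset(0,L)$, such a perturbation leaves the boundary values, and hence the denominator, equal to $1$.

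Concretely, writing $\ell:=L_2-L_1$, I would take the test function equal to $\frac{1}{\sqrt2}$ on $[0,L]\setminus[L_1,L_2]$ and, on $[L_1,L_2]$, equal to the symmetric ``V'' that decreases linearly from $\frac{1}{\sqrt2}$ at $L_1$ to $\frac{1}{\sqrt2}-\delta$ at the midpoint and returns to $\frac{1}{\sqrt2}$ at $L_2$, for a depth $\delta\in[0,\tfrac{1}{\sqrt2}]$ (keeping $a\ge0$). The corners may be smoothed without affecting the estimates in the limit, so this is an admissible competitor. Using $h\le c$ on $[L_1,L_2]$ for the gradient term, so that $\int_{L_1}^{L_2}(a')^2h^2\le c^2\tfrac{4\delta^2}{\ell}$, and the exact integral of a squared affine function for the other term, I obtain
\[R_{j,h}(a)\le\frac{\lambda_j L}{2}+E(\delta),\qquad E(\delta):=-\frac{\lambda_j\ell}{\sqrt2}\,\delta+\Big(\frac{\lambda_j\ell}{3}+\frac{4c^2}{\ell}\Big)\delta^2,\]
together with $a(0)^2+a(L)^2=1$.

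It then remains to minimize the quadratic $E(\delta)$. Its unconstrained minimizer is $\delta^\ast=\dfrac{\lambda_j\ell/\sqrt2}{2(\lambda_j\ell/3+4c^2/\ell)}$, with
\[E(\delta^\ast)=-\frac{3\lambda_j^2\ell^3}{8(12c^2+\lambda_j\ell^2)},\]
so that $-E(\delta^\ast)$ is precisely the second term in $\gamma$. I would split according to whether $\delta^\ast$ respects the constraint $\delta\le\frac{1}{\sqrt2}$: a direct check shows $\delta^\ast\le\frac{1}{\sqrt2}$ is equivalent to $c^2\ge\frac{\lambda_j\ell^2}{24}$, which is also exactly the regime in which the second term of $\gamma$ is the smaller of the two. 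In that regime one takes $\delta=\delta^\ast$ and reads off $\sigma_j(h)\le\frac{\lambda_j L}{2}-\frac{3\lambda_j^2\ell^3}{8(12c^2+\lambda_j\ell^2)}$. In the complementary regime $c^2<\frac{\lambda_j\ell^2}{24}$ one takes $\delta=\frac{1}{\sqrt2}$, giving $E(\tfrac{1}{\sqrt2})=-\frac{\lambda_j\ell}{3}+\frac{2c^2}{\ell}$; the bound $c^2<\frac{\lambda_j\ell^2}{24}$ upgrades this to $-E(\tfrac{1}{\sqrt2})>\frac{\lambda_j\ell}{4}$, which is now the smaller of the two terms. In both cases the quantity proved equals the corresponding term of the $\min$, so the stated $\gamma$ is attained.

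The computations are routine; the only real care is the bookkeeping. The main point to verify is that the single threshold $c^2=\frac{\lambda_j\ell^2}{24}$ simultaneously governs (i) on which side of the constraint the minimizer $\delta^\ast$ falls and (ii) which of the two expressions defining $\gamma$ is the minimum, so that in each regime one proves a bound that equals $\gamma$ rather than merely dominating it. This alignment is what makes the $\min$ in the statement the honest estimate, and I expect no genuine obstacle beyond it, apart from the (standard) remark that smoothing the V-profile does not weaken the inequality.
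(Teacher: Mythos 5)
Your proposal is correct and follows essentially the same route as the paper: the paper also perturbs the constant test function by a piecewise-linear ``V'' dip on $[L_1,L_2]$ (parametrized by slope $\delta$ rather than depth, and without the $\frac{1}{\sqrt2}$ normalization), bounds the gradient energy using $h\le c$, and chooses $\delta$ as the minimum of the unconstrained optimizer and the admissibility threshold, which produces exactly the same two regimes and the same $\gamma$. Your exact minimization of the quadratic $E(\delta)$ and the explicit check that the single threshold $c^2=\lambda_j(L_2-L_1)^2/24$ governs both the constraint and the $\min$ is a slightly cleaner bookkeeping of what the paper does implicitly, but it is not a different argument.
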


\begin{remarque}
    In particular, when $c\to\infty$ or $L_2-L_1\to0$, we have that $\gamma\to0$.
    Moreover, if $c\geq\frac{\sqrt{\lambda_j}(L_2-L_1)}{2\sqrt{6}}$, then we choose $\gamma=\frac{3\lambda_j^2(L_2-L_1)^3}{8(12c^2+\lambda_j(L_2-L_1)^2)}$ and if $c\leq\frac{\sqrt{\lambda_j}(L_2-L_1)}{2\sqrt{6}}$, then we choose $\gamma=\frac{\lambda_j(L_2-L_1)}{4}$.
\end{remarque}

\begin{proof}
    For $\delta>0$ to be chosen appropriately, we construct
    $$a(r)=\begin{cases}
        1\,,&0\leq r\leq L_1\,,\\
        1-\delta(r-L_1)\,,& L_1\leq r\leq\frac{L_1+L_2}{2}\,,\\
        1+\delta(r-L_2)\,,&\frac{L_1+L_2}{2}\leq r\leq L_2\,,\\
        1\,,&L_2\leq r\leq L.
    \end{cases}
    $$
    In order to have a positive function, we need $\delta<\frac{2}{L_2-L_1}$.
    We have that the Rayleigh quotient of $a$ is given by
    \[R_{j,h}(a)=\frac{\int\limits_0^La'(r)^2h^2(r)+\lambda_ja(r)^2\,dr}{2}=\frac{\lambda_j L}{2}+\frac{\delta^2\lambda_j(L_2-L_1)^3}{24}-\frac{\delta \lambda_j(L_2-L_1)^2}{4}+\frac{1}{2}\int\limits_{L_1}^{L_2} \delta^2h(r)^2\,dr\,.\]
    Since $h\leq c$ on $\lbrack L_1,L_2\rbrack$, we have that
    \[R_{j,h}(a)\leq \frac{\lambda_j L}{2}+\frac{\delta(L_2-L_1)}{2}\bigg(\delta\bigg(c^2+\frac{\lambda_j(L_2-L_1)^2}{12}\bigg)-\frac{\lambda_j(L_2-L_1)}{2}\bigg)\,.\]
If $\delta\leq\frac{3\lambda_j(L_2-L_1)}{12c^2+\lambda_j(L_2-L_1)^2}$, then we have that
\[\delta\bigg(c^2+\frac{\lambda_j(L_2-L_1)^2}{12}\bigg)-\frac{\lambda_j(L_2-L_1)}{2}\leq\frac{-\lambda_j(L_2-L_1)}{4}\,,\]
which implies that
\[R_{j,h}(a)\leq\frac{\lambda_j L}{2}-\frac{\delta\lambda_j(L_2-L_1)^2}{8}\,.\]
    Finally, by setting $\delta=\min\bigg\{\frac{2}{L_2-L_1}\,,\frac{3\lambda_j(L_2-L_1)}{12c^2+\lambda_j(L_2-L_1)^2}\bigg\}$, we have, by the variational characterization \eqref{eq:cara1} of $\sigma_{j,1}(h)$ and by the fact that $\sigma_j(h)\leq\sigma_{j,1}(h)$, that
    \[\sigma_j(h)\leq\frac{\lambda_j L}{2}-\gamma\,,\]
    where $\gamma=\min\bigg\{\frac{\lambda_j(L_2-L_1)}{4}\,,\frac{3\lambda_j^2(L_2-L_1)^3}{8(12c^2+\lambda_j(L_2-L_1)^2)}\bigg\}$.
\end{proof}

\section{Metrics of revolution}\label{sec:revolution}

In this section, we apply the method developed in Section \ref{sec:proofs} to prove Theorem \ref{thm:revolution}. The difference is that, after separation of variables, the functions $a_j$ must satisfy $a_j(L)=0$ for each $j\geq1$ in addition to satisfying the differential equation \eqref{eq:dea}. Thus, this problem admits a unique solution that has the associated eigenvalue $\sigma_{(j)}(h)$.

\begin{proof}[Proof of Theorem \ref{thm:revolution}]
    Just like in the proof of Theorem \ref{thm:upperbound}, we use a test function in the variational characterization of $\sigma_{(j)}(g_h)$. The only difference is that we cannot use the constant function since we need to have $a(L)=0$.

    As $h$ must satisfy $h(L)=0$ and $h'(L)=-1$, there exists $\varepsilon>0$ such that for $t\in (L-\varepsilon,L\rbrack$, we have
    \[\frac{L-t}{2}\leq h(t)\leq 2(L-t)\,.\]
    We introduce the function
    $$
    a(t)=\begin{cases}
        1\,,&0\leq t\leq L-\varepsilon\,,\\
        \frac{L-t}{\varepsilon}\,,&L-\varepsilon\leq t\leq L\,.
    \end{cases}
    $$
    Then, its Rayleigh quotient satisfies
    \begin{gather*}
        R_{j,h}(a)=\frac{1}{h(0)^2}\bigg(\lambda_{(j)}(L-\varepsilon)+\int\limits_{L-\varepsilon}^{L}\frac{h(t)^2}{\varepsilon^2}+\lambda_{(j)}\bigg(\frac{L-t}{\varepsilon}\bigg)^2\,dt\bigg)\\
        \leq \frac{1}{h(0)^2}\bigg(\lambda_{(j)}(L-\varepsilon)+\frac{(4+\lambda_{(j)})\varepsilon}{3}\bigg)
    \end{gather*}
    By taking the limit as $\varepsilon\to0$, we prove that
    \[\sigma_{(j)}(h)\leq\frac{\lambda_{(j)}L}{h(0)^2}\,.\]
    Moreover, the inequality is strict by the same argument used in the proof of Theorem \ref{thm:upperbound}.

    To show that $\sup\{\sigma_{(j)}(h):h(0)=h_0\}=\frac{L\lambda_{(j)}}{h_0^2}$, we copy verbatim the proof of Proposition \ref{prop:supev} but with the family of smooth functions $(h_\varepsilon)$ defined by
    $$
    h_\varepsilon(t)=\begin{cases}
        h_0\,,&0\leq t\leq\varepsilon\,,\\
        \frac{h_0}{\varepsilon}\,,&2\varepsilon\leq t\leq L-2\varepsilon\,,\\
        L-t\,,&L-\varepsilon\leq t\leq L\,.
    \end{cases}
    $$
    and $h_\varepsilon$ increases on $\lbrack \varepsilon,2\varepsilon\rbrack$ and decreases on $\lbrack L-2\varepsilon,L-\varepsilon\rbrack$.
\end{proof}

\begin{proof}[Proof of Theorem \ref{thm:gaprevolution}]
    Let $a_j$ be the eigenfunction associated to $\sigma_{(j)}(h)$. By taking it as a test function in the variational characterization of $\sigma_{(j+1)}(h)$, we get
    \begin{gather*}
        \sigma_{(j+1)}(h)\leq \frac{1}{h^2(0)}\int\limits_0^La_j'(t)^2h(t)^2+\lambda_{(j+1)}a_j(t)^2\,dt\\
        =\frac{1}{h^2(0)}\int\limits_0^La_j'(t)^2h(t)^2+\lambda_{(j)}a_j(t)^2\,dt+\frac{1}{h^2(0)}\int\limits_0^L (\lambda_{(j+1)}-\lambda_{(j)})a_j(t)^2\,dt\\
        =\sigma_{(j)}(h)+\frac{\lambda_{(j+1)}-\lambda_{(j)}}{\lambda_{(j)}}\frac{1}{h^2(0)}\int\limits_0^L \lambda_{(j)}a_j(t)^2\,dt<\sigma_{(j)}(h)+\frac{(\lambda_{(j+1)}-\lambda_{(j)})\sigma_{(j)}(h)}{\lambda_{(j)}}\\
        \leq\sigma_{(j)}(h)+\frac{L(\lambda_{(j+1)}-\lambda_{(j)})}{h^2(0)}\,,
    \end{gather*}
    where we use the fact that $\frac{1}{h^2(0)}\int\limits_0^L \lambda_{(j)}a_j(t)^2\,dt<\sigma_{(j)}(h)$ because the eigenfunction $a_j$ is not constant and Theorem \ref{thm:revolution} to obtain the last inequality. Thus, we have that
    \[\sigma_{(j+1)}(h)-\sigma_{(j)}(h)<\frac{L(\lambda_{(j+1)}-\lambda_{(j)})}{h^2(0)}\,.\]
    Moreover, this upper bound is optimal. Indeed, consider the family of smooth functions $(h_\varepsilon)$ constructed in the proof of Theorem \ref{thm:revolution}. By the previous inequality, for $j\geq0$, we have that
    \[\sigma_{(j+1)}(h_\varepsilon)=\sum\limits_{k=0}^{j}\sigma_{(j+1-k)}(h_\varepsilon)-\sigma_{(j-k)}(h_\varepsilon)\leq\sum\limits_{k=0}^j\frac{L(\lambda_{(j+1-k)}-\lambda_{(j-k)})}{h_0^2}=\frac{L\lambda_{(j+1)}}{h_0^2}\,.\]
    By Theorem \ref{thm:revolution}, we know that $\sigma_{(j+1)}(h_\varepsilon)\to\frac{L\lambda_{(j+1)}}{h_0^2}$ as $\varepsilon\to0$. This implies that each term in the previous sum converges, namely, for all $0\leq k\leq j$, we have that
    $$
    \sigma_{(j+1-k)}(h_\varepsilon)-\sigma_{(j-k)}(h_\varepsilon) \to \frac{L(\lambda_{(j+1-k)}-\lambda_{(j-k)})}{h_0^2}\,,
    $$
    as $\varepsilon\to0$.
\end{proof}

\section{Acknowledgements}
The authors acknowledge support of the SNSF project ‘Geometric Spectral Theory’, grant number 200021-19689.

\bibliographystyle{plain}              
\bibliography{bibliographie}

\end{document}